\def\r{\mathbb R}
\def\E{\mathbb E}
\def\t{\mathbf t}
\def\n{\mathbf n}
\def\b{\mathbf b}
\def\e{\mathbf e}
\def\x{\mathbf x}
\def\y{\mathbf y}
\newtheorem{theorem}{Theorem}[section]
 \newtheorem{proposition}[theorem]{Proposition}
\theoremstyle{definition}
\begin{document}

\markboth{M. E. Aydin, A. Has, B. Yilmaz}
{A non-Newtonian approach in differential geometry of curves: multiplicative rectifying curves}

%%%%%%%%%%%%%%%%%%%%% Publisher's Area please ignore %%%%%%%%%%%%%%%
%
 %
%%%%%%%%%%%%%%%%%%%%%%%%%%%%%%%%%%%%%%%%%%%%%%%%%%%%%%%%%%%%%%%%%%%%

\title[Multiplicative rectifying curves]{A non-Newtonian approach in differential geometry of curves: multiplicative rectifying curves}

\author{Muhittin Evren Aydin}

\address{Department of Mathematics, Faculty of Science, Firat University, Elazig,  23200 Turkey}
\email{meaydin@firat.edu.tr }

\author{ Aykut Has}

\address{Department of Mathematics, Faculty of Science, Kahramanmaras Sutcu Imam University, Kahramanmaras, 46100, Turkey  }
\email{ahas@ksu.edu.tr }

\author{ Beyhan Yilmaz}

\address{Department of Mathematics, Faculty of Science, Kahramanmaras Sutcu Imam University, Kahramanmaras, 46100, Turkey  }
\email{byilmaz@ksu.edu.tr }

\keywords{ Rectifying curve; spherical curve, multiplicative calculus; multiplicative Euclidean space}
\subjclass{ Primary 53A04; Secondary 11U10,	08A05.}
\begin{abstract}
In this paper, we study the rectifying curves in multiplicative Euclidean space of dimension $3$, i.e., those curves for which the position vector always lies in its rectifying plane. Since the definition of rectifying curve is affine and not metric, we are directly able to perform multiplicative differential-geometric concepts to investigate such curves. Having presented several characterizations, we completely classify the multiplicative rectifying curves by means of the multiplicative spherical curves. 
\end{abstract}
\maketitle

%%%%%%%%%%%%%%%%%%%%%%%

\section{Introduction}\label{intro}

%%%%%%%%%%%%%%%%%%%%%%%

\quad Derivative and integral, which play a central role in the infinitesimal (Newtonian) calculus, are the extensions of the aritmetic operations, addition and subtraction. Hence, it is reasonable to expect that alternative arithmetic operations will also engender alternative calculi. In this sense, Volterra and Grossman and Katz, in their pioneering works \cite{grossman,grossman2,volterra}, independently introduced many different calculi (the geometric calculus, the anageometric calculus, and etc.) from the Newtonian calculus where we are interested in the {\it multiplicative calculus}. The multiplicative calculus is so-called because the calculus depends on multiplication and division operations. 

\quad In the last decades, there has been an ascending interest in improving the theory and applications of multiplicative calculus. From a mathematical point of view, the contribution to a non-Newtonian calculus is of own interest. Non-Newtonian approaches can be found, for example, in complex analysis \cite{bashirov2,bashirov4,kadak,uzer}, in differential equations  \cite{bashirov3,riza,waseem,yalcin,yalcin2}, in numerical analysis \cite{aniszewska,boruah,ozyapici,ozyapici2,misirli,yazici}, in algebra \cite{cakmak,cordova}, in variational analysis \cite{torres}, and in spectral and Dirac system theories \cite{goktas,goktas2,gulsen,yilmaz}. In addition, multiplicative calculus has remarkable applications in dynamical systems \cite{aniszewska,aniszewska2,aniszewska3,rybaczuk2}, in economics \cite{cordova2,filip,ozyapici3}, and in image analysis \cite{florack,mora}.

\quad In this paper, we investigate the differential-geometric curves using the tools of multiplicative calculus. As far as the authors are aware, there has been no such attempt in the literature, except the comprehensive book by Svetlin \cite{georgiev} published in 2022. In his book, the author carried out the multiplicative tools for the study of differential-geometric curves, surfaces and higher-dimensional objects. Here, we pursue two goals: The first is to mathematically enrich the multiplicative calculus by adding differential-geometric interpretations. The second is to show that, in some cases, non-Newtonian derivatives and integrals need to be performed in differential geometry instead of the usual derivatives and integrals.

\quad More clearly, consider the following subset of $ \r^2$ (see Figure \ref{fig1})
$$
C=\{(x,y) \in  \r^2: (\log x)^2+(\log y)^2=1, x,y>0 \}.
$$ 
We also can parameterize this set as $x(t)=e^{\cos (\log t)}$ and $y(t)=e^{\sin (\log 
t)}$, $t>0$. If we use the usual arithmetic operations, derivative and integral, then it would not be easy to understand what the set $C$ expresses geometrically. With or without the help of computer programs, we cannot even calculate its basic invariants, e.g., the arc length function $s(t)$ is given by a complicated integral
$$
s(t)=\int^t \frac{1}{u}\left ((\sin(\log u)e^{\cos (\log u)})^2+(\cos(\log u)e^{\sin (\log u)})^2 \right )^{1/2}du.
$$
However, applying the multiplicative tools, we see that $C$ is indeed a multiplicative circle parameterized by the multiplicative arc length whose center is $(1,1)$ and radius $e$, which is one of the simplest multiplicative curves (see Section \ref{sec3}). This is the reason why, in some cases, the multiplicative tools need to be applied instead of the usual ones.

\quad In the case of the dimension $3$, the determination of geometric objects sometimes becomes more difficult when the multiplicative tools are omitted. For example, consider the following parameterized curve (see Fig. \ref{fig1})
\begin{equation}
x(t)=e^{\sec(\log t)/\sqrt{2}}, \quad y(t)=e^{\sec(\log t)\cos(\log t^{\sqrt{2}})/\sqrt{2}}, \quad z(t)=e^{\sec(\log t)\sin(\log t^{\sqrt{2}})/\sqrt{2}}, t>0. \label{intro3}
\end{equation}
As in the case of the multiplicative circle, it would not be easy to work geometrically on this curve by the means of the usual differential geometry. However, from the perspective of multiplicative differential geometry, it is a multiplicative rectifying curve, a type of parameterized space curves introduced by B.-Y. Chen \cite{chen} in 2003. 

\quad On the other hand, if one uses the multiplicative derivative together with the usual arithmetic operations, then one would have some difficulties; for example, it would not satisfy the properties such as linearity, chain rule, Leibniz rule and so on (see \cite{bashirov}). However, these are essential tools to establish a differential-geometric theory. To overcome these difficulties, Svetlin \cite{georgiev} proposed a useful idea, which is explained as follows.

\quad Let $\r^n$ be the real vector space of the dimension $n \geq 2$. A {\it multiplicative Euclidean space} $\E^n_*$ is  the pair $(\r^n, \langle , \rangle_*)$ where $\langle , \rangle_*$ is the so-called {\it multiplicative Euclidean inner product}. We note that the usual vector addition and scalar multiplication on $\r^n$ are now replaced with the multiplicative operations (see Section \ref{sec2}). With these new arithmetic operations, the multiplicative derivative has the properties that a derivative has. Therefore, it is now suitable for our purpose. 

\quad We would like to emphasize the importance of the rectifying curves whose position vector always lies in its own rectifying plane, because of their close relationship to spherical curves, helices, geodesics and centrodes in mechanics, see \cite{chen2,chen3,deshmukh}. In addition, we point out the underlying space of $\E_*^3$ is $\r^3$ and the definition of rectifying curve is affine and not metric. Hence, if we want to examine the geometric features of the rectifying curves, we may directly use the multiplicative differential-geometric concepts. These are the justifications that we consider such curves in $\E^3_*$.

\quad The structure of the paper is as follows. After some preliminaries on multiplicative algebra and calculus (Section \ref{sec2}), we will recall in Section \ref{sec3} the curvatures and Frenet formulas of multiplicative space curves. In Proposition \ref{prop1}, we will characterize multiplicative curves lying on a multiplicative sphere in terms of their curvatures. In Section \ref{sec4}, we will introduce  the multiplicative counterpart to the notion of rectifying curves. We will completely classify multiplicative rectifying curves using multiplicative spherical curves (Theorem \ref{chen.th.1}). Before that, however, we will need to prove some results characterizing rectifying curves in terms of their curvatures and the multiplicative distance function (Propositions \ref{chen.pr.1}, \ref{chen.pr.2}, \ref{chen.pr.3}, \ref{chen.pr.4}).

%%%%%%%%%%%%%%%%%%%%%%%
\section{Preliminaries} \label{sec2}
%%%%%%%%%%%%%%%%%%%%%%%

\quad In the present section we recall the multiplicative arguments from algebra and calculus, cf. \cite{georgiev,georgiev2,georgiev3}. 

\quad Let $\r_*$ be the set of all the positive real numbers. For $a,b \in \r_*$ we set
\begin{eqnarray*}
 a +_* b &=& e^{ \log a + \log b}, \\
a -_* b &=&  e^{ \log a - \log b},  \\
a \cdot_* b &=&  e^{ \log a \cdot \log b},  \\
a /_* b &=&  e^{ \log a / \log b}, \quad b \neq 1.
\end{eqnarray*}
Here, for some $a \in \r_*$, we also have
$$
a^{2_*}=e^{(\log a)^2}, \quad a^{\frac{1}{2}_*}=e^{\sqrt{\log a}}.
$$

\quad It is direct to conclude that the triple $(\r_{*},+_*,\cdot_*)$ is a field. We denote by $\r_*$ this field for convenience. Introduce
\begin{equation*}
 \left \vert a \right \vert_*=\left\{ 
\begin{array}{ll}
a, & a\in [ 1,\infty ) \\ 
1/a, & a \in (0,1).%
\end{array}%
\right.
\end{equation*} 

\quad Given $x \mapsto f(x) \in \r_*$, $x \in I \subset \r_*$, the {\it multiplicative derivative} of $f$ at $x$ is defined by 
$$
f^*(x)=\lim_{h\rightarrow 1}(f(x+_*h)-_*f(x))/_*h.  
$$
In terms of the usual arithmetical operations,
$$
f^*(x)=\lim_{h\rightarrow 1}\left(\frac{f(hx)}{f(x)}\right)^{1/\log h},  
$$
or by L' Hospital's rule, 
\begin{equation*}
f^*(x)=e^{x(\log f(x))'}, 
\end{equation*}
where the prime is the usual derivative with respect to $x$. 

\quad Denote by $f^{*(n)}(x)$ the $n$th-order multiplicative derivative of $f(x)$ which is the multiplicative derivative of $f^{*(n-1)}(x)$, for some positive integer $n > 1$. We call the function $f(x)$ {\it multiplicative differentiable} on $I$ if $f^{*(n)}(x)$ exists where, if necessary, $n$ may be extended in maximum order that will be needed.

\quad We may easily conclude that the multiplicative derivative holds some essential properties as linearity, Leibniz rule and chain rules (see \cite{georgiev2}).

\quad The {\it multiplicative integral} of the function $f(x)$ in an interval $[a,b] \subset \r_*$ is defined by 
\begin{equation*}
  \int_{*a}^{b}f(x) \cdot_* {d_*x}=e^{\int_{a}^{b} \frac{1}{x}\log f(x)dx }.
\end{equation*}

\quad Let $\r_*^n =\{(x_1 ,..., x_n) : x_1 ,..., x_n \in \r_* \}$ and $\x , \y \in \r_*^n$. Then $\r_*^n$ is a vector space on $\r_*$ with the pair of operations
\begin{eqnarray*}
\x +_* \y &=&(x_1 +_* y_1 ,..., x_n +_* y_n)=(x_1  y_1 ,..., x_n  y_n), \\
a\cdot_*\x &=&(a \cdot_* x_1,..., a \cdot_* y_n)=(e^{\log a \log x_1},...,e^{\log a \log x_n} ), \quad a \in \r_*.
\end{eqnarray*}
The elements of {\it multiplicative canonical basis} of $\r_*^n$ are 
$$
 \e_1 = (1_*,0_*...,0_*), \e_2 = (0_*,1_*,...,0_*), ..., \e_n = (0_* ,0_*,..., 1_*),  
$$
where $0_*=1$ and $1_*=e$. Also, denote by $\mathbf{0}_*=(0_*,...,0_*)$ the multiplicative zero vector. 

\quad A positive-definite scalar product on $\r_*^n $ is defined by
$$ \langle \x , \y \rangle _* =x_1 \cdot_* y_1 +_*...+_*x_n \cdot_* y_n= e^{\log x_1 \log y_1+...+\log x_n \log y_n}.$$
We call $ \langle , \rangle _* $ {\it multiplicative Euclidean inner product} and $(\r_*^n , \langle , \rangle _*)$ {\it multiplicative Euclidean space} denoted by $\E_*^n$. Also, we call two vectors $\x$ and $\y$ {\it multiplicative orthogonal} if $ \langle \x , \y \rangle _* =0_*$. The induced {\it multiplicative Euclidean norm} $\| \cdot \|_*$ on $\E_*^n $ is 
$$
 \|\x\|_*=(\langle \x , \y \rangle _*)^{\frac{1}{2}_*} = e^{\sqrt{\log x_1^2+...+ \log x_n^2 }}.
$$ 
A vector $\x$ with $\|\x\|_* =1_*$ is said to be {\it multiplicative unitary}.

\quad Let $\theta \in [0_*,e^{\pi}] $. We then introduce 
$$\cos_* \theta = e^{\cos (\log \theta)}, \quad \arccos_* \phi = e^{\arccos (\log \phi)},$$
for $\phi \in [e^{-1},1_*]$. Then, the {\it multiplicative radian measure} of {\it multiplicative angle} between $\x $ and $ \y$ is defined by
$$ \theta = \arccos_*  \left ( \langle \x , \y \rangle _*/_*  (\|\x\|_* \cdot_* \|\y\|_*) \right ) = \arccos_*  \left ( e^{\frac{\log \langle \x , \y \rangle_*}{\log \|\x\|_* \log \|\y\|_*}}  \right ) . $$

\quad The {\it multiplicative cross product} of $\x$ and $\y$ in $\E_*^3$ is defined by
\begin{equation*}
 \x \times_* \y = (e^{\log x_2\log y_3 -\log x_3\log y_2},e^{\log x_3\log y_1 -\log x_1\log y_3}, e^{\log x_1\log y_2 -\log x_2\log y_1}) .
\end{equation*}
It is direct to prove that the multiplicative cross product holds the standart algebraic and geometric properties. For example, $\x \times_* \y$ is multiplicative orthogonal to $\x$ and $\y$. In addition, $\x \times_* \y = \mathbf{0}_*$ if and only if $\x$ and $\y$ are multiplicative collinear.

\quad A {\it multiplicative line} passing through a point $P=(p_1,p_2,p_3)$ and multiplicative parallel to $\mathbf{v}=(v_1,v_2,v_3)$ is a subset of $\E_*^3$ defined by
$$ \{Q=(q_1,q_2,q_3) \in \E_*^3 : Q=P+_*t\cdot_*\mathbf{v} \} ,$$
where $q_i=e^{\log p_i+ \log t \log v_i}$, $i = 1,2,3$. We point out that the multiplicative parallelism is algebraically equivalent to multiplicative collinearity.
 
\quad A {\it multiplicative plane} passing through a point $P$ and multiplicative orthogonal to $\mathbf{v}$ is a subset of $\E_*^3$ defined by 
$$  \{Q \in \E_*^3 : \langle Q-_*P , \mathbf{v}  \rangle_* =0_*\} ,$$
where 
$$ e^{(\log q_1 - \log p_1)\log v_1+(\log q_2 - \log p_2)\log v_2+(\log q_3 - \log p_3)\log v_3}=0_*.$$

\quad A {\it multiplicative sphere} with radius $r >0_*$ and centered at $C=(c_1,c_2,c_3) \in \E_*^3$ is a subset of $\E_*^3$ defined by 
$$ \{Q \in \E_*^3 : \|Q-_*C\|_*=r \} , $$
where 
$$e^{ (\log q_1 - \log c_1)^2+(\log q_2 - \log c_2)^2+(\log q_3 - \log c_3)^2}=e^{(\log r)^2}.$$

%%%%%%%%%%%%%%%%%%%%%%%%%%%%%%%%
\section{Differential geometry of curves in $\E_*^3$} \label{sec3}
%%%%%%%%%%%%%%%%%%%%%%%%%%%%%%%%

\quad Consider a function  $\x : I \subset \r_{*} \to \E_*^3$ where $s\mapsto \x (s) =(x_1(s),x_2(s),x_3(s))$. Suppose that $x_i (s)$ ($i=1,2,3$) is multiplicative differentiable on $I$, setting $\x^*(s)=(x^*_1(s),x_2^*(s),x_3^*(s)).$

\quad We call the subset $\mathcal{C} \subset \E_*^3$ which is the range of $\x(s)$ a {\it multiplicative curve}. Here $\x(s)$ is said to be a {\it multiplicative parametrization} of $\mathcal{C}$. We also call $\x(s)$ a {\it multiplicative regular parameterized curve} if nowhere $\x^*(s)$ is $\mathbf{0}_*$. If, also, $\| \x^*(s) \|_* =1_*$,  or equivalently,
$$
 e^{(\log x_1^*)^2+(\log x_2^*)^2+(\log x_3^*)^2} =1_*, \quad \text{ for every } s\in I, 
$$ 
then we call that $\mathcal{C}$ is parametrized by {\it multiplicative arc length}. 

\quad We may easily observe that a multiplicative arc length parameter is independent from the multiplicative translations. In addition, one always could find a multiplicative arc length parameter of the curve $ \mathcal{C} $ (see \cite{georgiev}). In the remaining part, unless otherwise specified, we will assume that $\x(s)$ in $\E_*^3$ is a multiplicative arc length parameterized curve. 

\quad We call that $\mathbf{u}(t)=(u_1(t),...,u_n(t))$ is a {\it multiplicative differentiable vector field} along the curve $\mathcal{C}$ if each $u_i(t)$ is multiplicative differentiable on $I$, $i =1,...,n$. If $\mathbf{u}(t)$ and $\mathbf{v}(t)$ are two multiplicative differentiable vector fields on $\mathcal{C}$, then it is direct to conclude
\begin{equation}
(\langle \mathbf{u}(t) , \mathbf{v}(t) \rangle_*)^* = \langle \mathbf{u}^*(t) , \mathbf{v}(t) \rangle_* +_* \langle \mathbf{u}(t) , \mathbf{v}^*(t) \rangle_* . \label{metric}
\end{equation}

\quad Assume that $\x (s) $ is {\it multiplicative biregular}, that is, nowhere $\x^* (s)$ and $ \x^{**}(s)$ are multiplicative collinear. We consider a trihedron $\{\t (s), \n (s), \b (s) \}$ along $\x(s)$, so-called {\it multiplicative Frenet frame}, where 
\begin{equation*}
\left. 
\begin{array}{l}
\t(s)=\x^*(s), \\ 
\n (s)= \x^{**}(s)/_*\| \x^{**}(s)\|_*, \\
\b (s)= \t (s) \times_* \n (s) .
\end{array}%
\right. 
\end{equation*}

 Hence, by setting $\t (s)=(t_1(s),t_2(s),t_3(s))$, $\n (s)=(n_1(s),n_2(s),n_3(s))$ and $\b (s)=(b_1(s),b_2(s),b_3(s))$, we have
\begin{equation*}
\left. 
\begin{array}{l}
t_i=x_i^* ,\\ 
n_i=e^{\frac{\log x_i^{**}}{\sqrt{(\log x_1^{**})^2+(\log x_2^{**})^2+(\log x_3^{**})^2}}}, \\ 
b_i=e^{(-1)^{j+k-1}(\log t_j\log n_k -\log t_k\log n_j)}, i,j,k \in \{1,2,3 \},  
\end{array}%
\right. 
\end{equation*}
where $j \neq  i \neq k$ and $j <k$.

\quad The vector field $\t(s)$ (resp. $\n(s)$ and $\b(s)$) along $\x(s)$ is said to be {\it multiplicative tangent} (resp. {\it principal normal} and {\it binormal}). It is direct to prove that $\{\t (s), \n (s), \b (s) \}$ is mutually multiplicative orthogonal and $\n (s) \times_* \b (s) =\t (s) $ and $\b (s) \times_* \t (s) =\n (s) $. We also point out that the arc length parameter and multiplicative Frenet frame are independent from the choice of multiplicative parametrization \cite{georgiev}.

\quad We define the {\it multiplicative curvature} $\kappa(s)$ and {\it torsion} $\tau(s)$ as
$$
\kappa(s) =\| \x^{**}(s)\|_*= e^{\sqrt{(\log(x_1^{**}(s))^2+(\log(x_2^{**}(s))^2+(\log(x_3^{**}(s))^2}} 
$$
and
$$
\tau(s)=\langle \n^*(s),\b(s) \rangle_* =e^{\log n_1^*(s) \log b_1(s)+\log n_2^*(s) \log b_2(s)+\log n_3^*(s) \log b_3(s)}.
$$
The {\it multiplicative Frenet formulas} are now
\begin{equation*}
\left. 
\begin{array}{l}
\t^* =\kappa \cdot _* \n, \\ 
\n^*=-_*\kappa \cdot_* \t +_* \tau \cdot_* \b \\ 
\b^*=-_* \tau \cdot_* \n, \\
\end{array}
\right.
\end{equation*}
or equivalently,
\begin{equation*}
\left. 
\begin{array}{l}
t_i^* =e^{\log \kappa \log n_i}, \\ 
n_i^*=e^{-\log \kappa \log t_i + \log \tau \log b_i}, \\ 
b_i^*=e^{-\log \tau \log n_i}, \quad  i =1,2,3.\\
\end{array}
\right.
\end{equation*}

\quad We call $\x(s)$ {\it multiplicative twisted} if nowhere $\kappa (s)$ and $\tau(s)$ is $0_*$. The multiplicative analogous of the fundamental theorem for space curves is the following (see \cite[p. 132-135]{georgiev}).
\begin{theorem}[Existence] \cite{georgiev}
Given multiplicative differentiable functions $f(s)>0_*$, $g(s)$ on $I$. Then, there is a unique multiplicative parametrized curve whose $\kappa(s)=f(s)$ and $\tau(s)=g(s)$. 
\end{theorem}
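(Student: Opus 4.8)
The plan is to transport the whole problem to classical Euclidean differential geometry via the componentwise logarithm and then invoke the classical fundamental theorem for space curves. Write $L\colon \E_*^n\to\r^n$ for the map $(x_1,\dots,x_n)\mapsto(\log x_1,\dots,\log x_n)$ and, on scalars, $\log\colon\r_*\to\r$. Directly from the definitions of Section~\ref{sec2}, $L$ and $\log$ together form an isomorphism of the multiplicative field-and-inner-product structure onto the classical one: $L(\x+_*\y)=L\x+L\y$, $L(a\cdot_*\x)=(\log a)\,L\x$, $\log\langle\x,\y\rangle_*=\langle L\x,L\y\rangle$, and $L(\x\times_*\y)=L\x\times L\y$. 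In particular $\log\|\x\|_*=|L\x|$, and $L$ carries multiplicative lines, planes and spheres to classical ones, while a multiplicative isometry corresponds to a classical rigid motion.

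First I would set up the parameter dictionary. For $s\in I\subset\r_*$ put $u=\log s$, so that $u$ runs over the classical interval $J=\log I$, and to a multiplicative curve $\x(s)=(x_1(s),x_2(s),x_3(s))$ associate the classical curve $\bar\x(u):=L\x(e^u)=(\log x_1(e^u),\log x_2(e^u),\log x_3(e^u))$. Using $f^*(x)=e^{x(\log f(x))'}$ together with the chain rule one checks $x_i^*(s)=e^{\bar x_i'(\log s)}$, and inductively $x_i^{*(k)}(s)=e^{\bar x_i^{(k)}(\log s)}$; that is, $L\x^{*(k)}(s)=\bar\x^{(k)}(\log s)$, where the prime on the right denotes the ordinary derivative in $u$. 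Combining this with the previous paragraph one gets: $\x(s)$ is multiplicative regular (resp.\ biregular) iff $\bar\x$ is classically regular (resp.\ biregular); $\|\x^*(s)\|_*=1_*$ iff $\bar\x$ has unit speed in $u$; the multiplicative Frenet frame $\{\t,\n,\b\}(s)$ maps under $L$ to the classical Frenet frame of $\bar\x$ at $u=\log s$; and the multiplicative curvature and torsion satisfy $\kappa(s)=e^{\bar\kappa(\log s)}$ and $\tau(s)=e^{\bar\tau(\log s)}$, where $\bar\kappa,\bar\tau$ are the classical curvature and torsion of $\bar\x$.

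With this dictionary in hand the theorem is immediate. Given $f(s)>0_*$ and $g(s)$ multiplicative differentiable on $I$, set $\bar\kappa(u)=\log f(e^u)$ and $\bar\tau(u)=\log g(e^u)$ on $J$; since $f>0_*$ forces $\log f>0$ and $f,g$ are multiplicative differentiable, $\bar\kappa$ is a classically differentiable positive function and $\bar\tau$ a classically differentiable function on $J$. By the classical fundamental theorem for space curves there is a unit-speed curve $\bar\x(u)$ in $\r^3$ with curvature $\bar\kappa$ and torsion $\bar\tau$, unique once an initial point and a positively oriented orthonormal frame at some $u_0\in J$ are prescribed. Define $\x(s):=L^{-1}\bar\x(\log s)=(e^{\bar x_1(\log s)},e^{\bar x_2(\log s)},e^{\bar x_3(\log s)})$. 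By the previous paragraph $\x(s)$ is a multiplicative arc length parametrized curve with $\kappa(s)=e^{\bar\kappa(\log s)}=f(s)$ and $\tau(s)=e^{\bar\tau(\log s)}=g(s)$; and any two multiplicative curves with these invariants pull back under $L$ to unit-speed curves with curvature $\bar\kappa$ and torsion $\bar\tau$, hence differ by a classical rigid motion, hence the original curves differ by a multiplicative isometry. (As in the classical statement, ``unique'' is understood up to a multiplicative translation and rotation, and is literal once $\x(s_0)$ and the Frenet frame there are fixed.)

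I expect the only real work to be the routine bookkeeping of the second paragraph: verifying that each ingredient of the multiplicative Frenet machinery — in particular the binormal built from $\times_*$ and the torsion obtained by multiplicatively differentiating $\n$ — corresponds under $L$ to its classical counterpart, and handling the parameter substitution $s=e^u$ carefully inside the multiplicative derivative. Once the lemma ``$L$ intertwines the Frenet apparatus'' is recorded, the fundamental theorem for multiplicative space curves (and, for that matter, much of Section~\ref{sec3}) reduces to the classical theory conjugated by $\exp/\log$. An alternative, self-contained route would be to solve the multiplicative linear Frenet ODE system for $\{\t,\n,\b\}$ directly and then multiplicatively integrate $\t$, but the reduction above makes the existence and the uniqueness transparent at once.
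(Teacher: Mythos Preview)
The paper does not prove this theorem; it is quoted from \cite{georgiev} (see the line ``The multiplicative analogous of the fundamental theorem for space curves is the following (see \cite[p.~132--135]{georgiev})'') and no argument is supplied in the present text. So there is no in-paper proof to compare your proposal against.

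That said, your approach is correct and is arguably the most transparent one. The dictionary you set up --- componentwise $\log$ as a field-and-inner-product isomorphism, together with the parameter change $u=\log s$ --- does intertwine the multiplicative Frenet apparatus with the classical one, exactly as you state; the identities $L(\x+_*\y)=L\x+L\y$, $L(a\cdot_*\x)=(\log a)\,L\x$, $\log\langle\x,\y\rangle_*=\langle L\x,L\y\rangle$, $L(\x\times_*\y)=L\x\times L\y$, and $L\x^{*(k)}(s)=\bar\x^{(k)}(\log s)$ all follow by one-line computations from the definitions in Section~\ref{sec2}. Once these are recorded, existence and uniqueness (up to multiplicative rigid motion) follow verbatim from the classical fundamental theorem. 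The alternative self-contained route you mention --- solving the multiplicative Frenet linear ODE system and then multiplicatively integrating $\t$ --- is closer in spirit to what a direct treatment in the cited reference would do over several pages; your conjugation argument is shorter and makes the dependence on the classical theory explicit, at the cost of not being self-contained within the multiplicative framework.
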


\begin{theorem}[Uniqueness] \cite{georgiev}
Given two multiplicative parametrized curves $\x(s)$ and $\y(s)$, $s\in I$, whose $\kappa_{\x}(s)=\kappa_{\y}(s)$ and $\tau_{\x}(s)=\tau_{\y}(s)$. Then, there is a multiplicative rigid motion $F$ such that $\y(s)=F(\x(s))$.
\end{theorem}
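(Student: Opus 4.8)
The plan is to transport the classical proof of the uniqueness of Frenet curves to the multiplicative setting. Fix a base point $s_0\in I$. Since the multiplicative rigid motions --- the maps of the form $\x\mapsto A\cdot_*\x+_*\mathbf{c}$ with $A$ a multiplicative orthogonal matrix preserving orientation and $\mathbf{c}$ a multiplicative translation vector --- act transitively on the pairs (point, positively oriented multiplicative orthonormal frame) and preserve $\langle\,,\,\rangle_*$, the multiplicative arc length, and hence the whole Frenet apparatus, I can choose a multiplicative rigid motion $F$ with $F(\x(s_0))=\y(s_0)$ that carries $\{\t(s_0),\n(s_0),\b(s_0)\}$ of $\x$ onto that of $\y$ at $s_0$. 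Replacing $\x$ by $F(\x)$, which still has curvature $\kappa_{\y}$ and torsion $\tau_{\y}$, it suffices to show: if $\x$ and $\y$ are multiplicative arc length curves with $\kappa_{\x}=\kappa_{\y}$, $\tau_{\x}=\tau_{\y}$, and with a common point and a common multiplicative Frenet frame at $s_0$, then $\x\equiv\y$; the sought $F$ is then exactly the rigid motion just used.

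Next I would introduce the scalar function
$$
g(s)=\langle\t_{\x},\t_{\y}\rangle_*+_*\langle\n_{\x},\n_{\y}\rangle_*+_*\langle\b_{\x},\b_{\y}\rangle_*.
$$
Applying the multiplicative product rule \eqref{metric} to each summand and substituting the multiplicative Frenet formulas, the $\kappa$-contributions cancel between the first two summands and the $\tau$-contributions between the last two, exactly as in the Newtonian computation, because $\kappa_{\x}=\kappa_{\y}$ and $\tau_{\x}=\tau_{\y}$. Hence $g^*(s)=0_*$ on $I$, so $g$ is multiplicatively constant; evaluating at $s_0$, where all three inner products equal $1_*$, gives $g\equiv 1_*+_*1_*+_*1_*=e^{3}$.

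Now I would invoke the multiplicative Cauchy--Schwarz inequality: for multiplicative unit vectors $\mathbf{u},\mathbf{v}$ one has $\langle\mathbf{u},\mathbf{v}\rangle_*\leq 1_*$ (that is, $\log\langle\mathbf{u},\mathbf{v}\rangle_*\leq 1$), with equality precisely when $\mathbf{u}=\mathbf{v}$. Since $\t,\n,\b$ are multiplicative unitary and the sum of the three terms already attains the maximal value $e^{3}$, each term must equal $1_*$ for every $s$; in particular $\t_{\x}(s)=\t_{\y}(s)$, i.e.\ $\x^*(s)=\y^*(s)$ on $I$. Taking the multiplicative integral from $s_0$ to $s$ componentwise and using $\x(s_0)=\y(s_0)$ yields $\x(s)=\y(s)$ for all $s\in I$, which finishes the argument.

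I expect the routine steps --- the cancellation producing $g^*=0_*$ and the concluding integration --- to be painless; the points that need care are the preliminary normalization, namely verifying that multiplicative rigid motions act transitively on point-plus-frame data and preserve curvature, torsion and orientation, together with pinning down the precise statement and equality case of the multiplicative Cauchy--Schwarz inequality with respect to the order induced by $\log$. A clean alternative worth recording is the observation that applying $\log$ to all coordinates and to the parameter is an isomorphism of $\E_*^3$ onto the ordinary Euclidean space $\E^3$ taking the multiplicative derivative to the ordinary one, since $\log x_i^*(s)=s(\log x_i)'(s)=\frac{d}{d\sigma}\log x_i(e^{\sigma})$ with $\sigma=\log s$; this identifies multiplicative arc length, curvature, torsion, Frenet frame and rigid motions with their classical counterparts, so the theorem becomes an immediate transcription of the classical uniqueness theorem for space curves.
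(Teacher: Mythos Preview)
The paper does not give a proof of this theorem at all: it is quoted from \cite{georgiev} (see the sentence ``The multiplicative analogous of the fundamental theorem for space curves is the following (see \cite[p. 132--135]{georgiev})'') and is recorded without argument, so there is nothing in the present paper to compare your attempt to.

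That said, your proposal is mathematically sound. The first route---align the Frenet frames at $s_0$ by a multiplicative rigid motion, differentiate $g(s)=\langle\t_{\x},\t_{\y}\rangle_*+_*\langle\n_{\x},\n_{\y}\rangle_*+_*\langle\b_{\x},\b_{\y}\rangle_*$ using \eqref{metric} and the multiplicative Frenet formulas, obtain $g^*(s)=0_*$, and conclude via the multiplicative Cauchy--Schwarz equality case---is exactly the classical argument rewritten in $\E_*^3$, and the cancellation and the value $g\equiv e^{3}$ are computed correctly. Your cautionary remarks about the two genuinely nonroutine ingredients (transitivity of the multiplicative rigid motions on point--frame pairs, and the equality case of Cauchy--Schwarz for $\langle\,,\,\rangle_*$ with respect to the order pulled back by $\log$) are well placed; both follow immediately once one unwinds the definitions, since $\log\langle\x,\y\rangle_*=\sum_i(\log x_i)(\log y_i)$ is an honest Euclidean inner product on the $\log$-coordinates.

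Your alternative observation is in fact the cleanest way to dispatch the theorem: the componentwise logarithm together with the substitution $\sigma=\log s$ is a bijection $\E_*^3\to\E^3$ carrying $+_*,\cdot_*,\langle\,,\,\rangle_*$, multiplicative differentiation, arc length, $\kappa$, $\tau$, the Frenet frame, and multiplicative rigid motions to their ordinary Euclidean counterparts, so the statement reduces verbatim to the classical uniqueness theorem. Either route would be acceptable; the second makes the ``care points'' of the first automatic.
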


\quad The {\it multiplicative} {\it osculating} (resp. {\it normal} and {\it rectifying}) {\it plane} of $\x(s)$ at some $s \in I$ is a multiplicative plane passing through $\x(s)$ and multiplicative orthogonal to $\b(s)$ (resp. $\t(s)$ and $\n(s)$). 

\quad We notice that $\x(s)$ is a subset of a multiplicative line if and only if $\kappa(s)$ is $0_*$ on $I$. Analogously, $\x(s)$ is a subset of multiplicative osculating plane itself if and only if $\tau(s)$ is $0_*$ on $I$. We call $\x(s)$ is a {\it multiplicative spherical curve} if it is a subset of a multiplicative sphere.

\quad In terms of the multiplicative Frenet frame, we can write the decomposition for $\x(s)$ as
\begin{equation}
\x(s) =  \langle \x (s), \t(s) \rangle _* \cdot_* \t(s) +_*  \langle \x (s), \n(s) \rangle _* \cdot_* \n(s) +_*  \langle \x (s), \b(s) \rangle _* \cdot_* \b(s), \label{decomp-1}
\end{equation}
or equivalently,
\begin{equation}
x_i(s)=e^{ \log \langle \x (s), \t(s) \rangle _* \log t_i + \log \langle \x (s), \n(s) \rangle _* \log n_i + \log \langle \x (s), \b(s) \rangle _* \log b_i }, \quad i =1,2,3. \label{decomp-2}
\end{equation}
Here we call the functions $ \langle \x (s), \t(s) \rangle _*$, $ \langle \x (s), \n(s) \rangle _*$ and $ \langle \x (s), \b(s) \rangle _*$ the {\it multiplicative tangential, principal normal} and {\it binormal components} of $ \x(s)$, respectively. In addition, denote by $\x^{\intercal_*}$ the multiplicative tangential component and by $\x^{\perp_*}$ multiplicative {\it normal} component. Then,
$$
 \| \x^{\intercal_*}(s) \|_* = \langle \x(s), \t(s) \rangle_*=e^{\log x_1 \log t_1 +\log x_2 \log t_2 +\log x_3 \log t_3}, 
$$
and 
\begin{equation*}
\left. 
\begin{array}{l}
\| \x^{\perp_*}(s) \|_* 
=e^{\sqrt{(\log  \langle \x(s), \n(s) \rangle_*)^2+(\log  \langle \x(s), \b(s) \rangle_*)^2}} \\
=e^{\sqrt{(\log x_1 \log n_1 +\log x_2 \log n_2 +\log x_3 \log n_3)^2+(\log x_1 \log b_1 +\log x_2 \log b_2 +\log x_3 \log b_3)^2}}. 
\end{array}
\right.
\end{equation*}

\quad Next, we give a characterization of the multiplicative spherical curves with $\kappa , \tau \neq 0_*$ in terms of their multiplicative position vectors. Without loss of generality, we will assume that $\mathbb{S}_*^2$ is a multiplicative sphere with radius $r>0_*$ and centered at $\mathbf{0}_*$.

\begin{proposition} \label{prop1}
Let $\x(s) \subset \E_*^3$ be a multiplicative twisted curve. If $\x(s) \subset \mathbb{S}_*^2$, then
$$
\x(s)=(e^{-1}/_*\kappa(s))\cdot_*\n(s)+_*((e^{-1}/_*\kappa(s))^*/_*\tau(s))\cdot_*\b(s)
$$
or equivalently
$$
x_i(s)=e^{- \log n_i(s) / \log \kappa(s)+(-1/ \log \kappa(s))^*\log b_i(s)/\log \tau(s)}.
$$
\end{proposition}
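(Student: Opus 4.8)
The plan is to transcribe the classical derivation of the position vector of a spherical curve into the multiplicative setting, using the multiplicative Leibniz rule \eqref{metric} for inner products in place of ordinary differentiation and carefully tracking the multiplicative scalar operations. Since $\mathbb{S}_*^2$ is centered at $\mathbf{0}_*$ with radius $r$, the hypothesis $\x(s) \subset \mathbb{S}_*^2$ says $\|\x(s)\|_* = r$, equivalently $\langle \x(s), \x(s) \rangle_* = r^{2_*}$, which is a multiplicative-constant function of $s$.

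First I would differentiate this relation. A multiplicative-constant function has multiplicative derivative $0_*$, so by \eqref{metric} and $\t = \x^*$,
$$
0_* = (\langle \x, \x \rangle_*)^* = \langle \x^*, \x \rangle_* +_* \langle \x, \x^* \rangle_* = 2_* \cdot_* \langle \x, \t \rangle_*,
$$
whence $\langle \x, \t \rangle_* = 0_*$: the multiplicative tangential component vanishes, and the decomposition \eqref{decomp-1} reduces to $\x = \langle \x, \n \rangle_* \cdot_* \n +_* \langle \x, \b \rangle_* \cdot_* \b$. It then remains to identify the two surviving components. Differentiating $\langle \x, \t \rangle_* = 0_*$ once more and using the Frenet formula $\t^* = \kappa \cdot_* \n$ together with $\|\t\|_* = 1_*$ gives
$$
0_* = \langle \x^*, \t \rangle_* +_* \langle \x, \t^* \rangle_* = 1_* +_* \kappa \cdot_* \langle \x, \n \rangle_*,
$$
and since the curve is multiplicative twisted, $\kappa(s) \neq 0_*$, so $\langle \x, \n \rangle_* = (-_*1_*)/_*\kappa = e^{-1}/_*\kappa$. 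Finally, differentiating $\langle \x, \n \rangle_* = e^{-1}/_*\kappa$ and invoking $\n^* = -_*\kappa \cdot_* \t +_* \tau \cdot_* \b$ together with $\langle \x, \t \rangle_* = 0_*$ and $\langle \t, \n \rangle_* = 0_*$ yields
$$
(e^{-1}/_*\kappa)^* = \langle \x^*, \n \rangle_* +_* \langle \x, \n^* \rangle_* = \tau \cdot_* \langle \x, \b \rangle_*,
$$
and $\tau(s) \neq 0_*$ gives $\langle \x, \b \rangle_* = (e^{-1}/_*\kappa)^*/_*\tau$. Substituting both components into the reduced decomposition produces the asserted identity, and the coordinate form then follows from \eqref{decomp-2}.

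I do not expect a serious obstacle: the argument is the non-Newtonian analogue of a textbook computation. The points requiring care are (i) checking that a multiplicative-constant function indeed has vanishing multiplicative derivative, so that the sphere condition may be differentiated; (ii) the bookkeeping of multiplicative scalars, e.g.\ that $\langle \x^*, \x \rangle_* +_* \langle \x, \x^* \rangle_* = 2_* \cdot_* \langle \x, \t \rangle_*$ and that cancelling the factor $2_* = e^2$ is legitimate in the field $\r_*$; and (iii) invoking $\kappa, \tau \neq 0_*$ exactly at the two divisions, which is guaranteed by the twisted hypothesis.
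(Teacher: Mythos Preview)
Your proposal is correct and follows essentially the same route as the paper: differentiate the sphere condition to kill the tangential component, differentiate $\langle \x,\t\rangle_*=0_*$ to obtain $\langle \x,\n\rangle_*=e^{-1}/_*\kappa$, differentiate once more using the Frenet formula for $\n^*$ together with $\langle \x,\t\rangle_*=0_*$ to get the binormal component, and substitute into \eqref{decomp-1}--\eqref{decomp-2}. The care points you flag (constant $\Rightarrow$ derivative $0_*$, cancelling $e^2$, and invoking $\kappa,\tau\neq 0_*$ at the divisions) are exactly the ones the paper handles implicitly.
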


\begin{proof}
By the assumption, because $\x(s) \subset \mathbb{S}_*^2$, we have $\| \x(s) \|_* =r$, for every $s$. It is equivalent to
\begin{equation}
\langle \x(s), \x(s) \rangle_* =e^{(\log r)^2}. \label{prop1-1}
\end{equation}
If we take multiplicative differentiation of both-hand sides in Eq. \eqref{prop1-1} with respect to $s$ then we have
$$
(\langle \x(s), \x(s) \rangle_* )^*=\left (e^{ (\log r)^2}\right )^*, 
$$
or, by Eq. \eqref{metric}, 
\begin{equation}
e^2\cdot_*\langle \x(s), \x^*(s) \rangle_*=0_*.  \label{prop1-2}
\end{equation}
We understand from Eq. \eqref{prop1-2} that the multiplicative tangential component of $\x(s)$ is 
\begin{equation}
\langle \x(s), \t(s) \rangle_*=0_* . \label{prop1-3}
\end{equation}
Again we take multiplicative differentiation of both-hand sides in Eq. \eqref{prop1-3} with respect to $s$, and we have
$$
(\langle \x(s), \t(s) \rangle_* )^*=(0_*)^*, 
$$
or
$$
\langle \t(s), \t(s) \rangle_* +_* \langle \x(s), \t^*(s) \rangle_*=0_*.
$$
Using the multiplicative Frenet formulas,
$$
1_* +_* \kappa(s) \cdot_* \langle \x(s), \n^*(s) \rangle_*=0_*
$$
or
\begin{equation}
\langle \x(s), \n(s) \rangle_*=e^{-1}/_*\kappa(s). \label{prop1-4}
\end{equation}
If we apply the same arguments in Eq. \eqref{prop1-4} and consider Eq. \eqref{prop1-3}, then we obtain
\begin{equation}
\langle \x(s), \b(s) \rangle_*=(e^{-1}/_*\kappa(s))^*/_*\tau(s). \label{prop1-5}
\end{equation}
The result of the theorem follows by replacing Eqs. \eqref{prop1-3}, \eqref{prop1-4}, \eqref{prop1-5} in Eqs. \eqref{decomp-1} and \eqref{decomp-2}, respectively.
\end{proof}

\quad As a direct conseqeunce of Proposition \ref{prop1}, if $\x(s)$ is multiplicative twisted and if $\x(s) \subset \mathbb{S}_*^2$, then
$$
r=\left ( (e^{-1}/_*\kappa(s))^{2_*}+_* ((e^{-1}/_*\kappa(s))^*/_*\tau(s)))^{2_*}) \right )^{\frac{1}{2}_*}
$$
or equivalently,
\begin{equation*}
(\log r)^2= \left ( \frac{1}{\log \kappa(s)} \right)^2 +\left ( \frac{e^{s (-1/ \log \kappa(s))'}}{\log \tau(s)}\right)^2,
\end{equation*}
where the prime $'$ denotes the usual derivative with respect to $s$.

\quad We point out that Proposition \ref{prop1} is valid for a multiplicative twisted curve. However, even in the case $\tau(s)=0_*$, we may use the arguments given in its proof. More explicitly, assume that $\x(s) \subset \mathbb{S}_*^2$ with $\tau(s)=0_*$. Then, if we take multiplicative differentiation in Eq. \eqref{prop1-4}, we derive that $\langle \x(s), \b(s) \rangle_*=0_*$, implying that $\x(s)$ and $\n(s)$ are multiplicative collinear due to Eq. \eqref{prop1-3}. Hence, we conclude that $\x(s)=(e^{-1}/_*\kappa(s))\cdot_*\n(s)$ and $r=|e^{-1}/_*\kappa(s)|_*$, or equivalently,
$$
x_i(s)=e^{(-1/\log \kappa(s)) \log n_i(s)} \quad \text{and} \quad r=|e^{-1/\log \kappa(s)}|_*, \quad i =1,2,3.
$$

\quad As an example, we may take such a multiplicative spherical curve of $\mathbb{S}_*^2$ with radius $1_*$, called {\it multiplicative great circle}, as follows. Let $\Gamma$ pass through $\mathbf{0}_*$ and multiplicative orthogonal to $\mathbf{e}_3$. Consider the multiplicative equator (see Fig. \ref{fig4})
$$
\mathcal{C}=\Gamma \cap \mathbb{S}_*^2 =\{(x,y,z) \in \E_*^3 : (\log x)^2+(\log y)^2=1, z=1 \} .
$$
It can be parametrized by its multiplicative arclength as $\x(s)=\left (e^{\cos \log s}, e^{\sin \log s}, 1 \right )$. Here, by a direct calculation, we may observe that 
$$
\kappa(s)=1_* \quad \text{and} \quad \n(s)= \left (e^{-\cos \log s}, e^{-\sin \log s}, 1 \right ),
$$
implying $\x(s)=e^{-1} \cdot_* \n(s)$.

\begin{figure}[hbtp]
\begin{center}
\includegraphics[width=.4\textwidth]{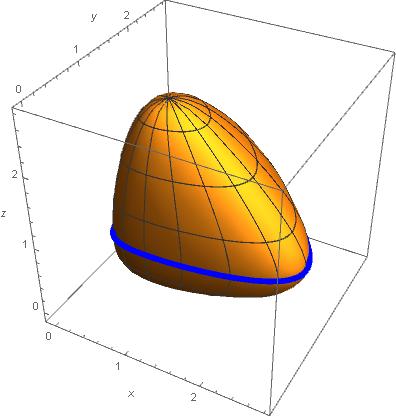} \qquad \includegraphics[width=.4\textwidth]{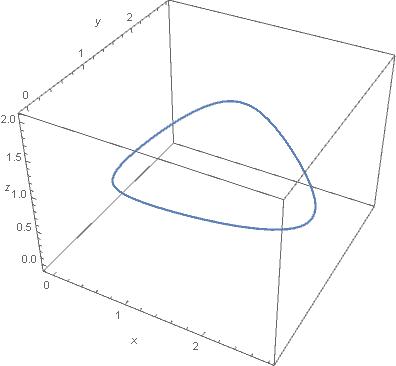}
\end{center}
\caption{A multiplicative equator of $\mathbb{S}_*^2$} \label{fig4}
\end{figure}

%%%%%%%%%%%%%%%%%%%%%%%%%%%%
\section{Multiplicative rectifying curves}\label{sec4}
%%%%%%%%%%%%%%%%%%%%%%%%%%%%

\quad In this section we will introduce the multiplicative analogous of the rectifying curves and then present some characterization and classification results.

\quad Given a multiplicative biregular curve $\x(s)$ in $\E_*^3$, $s \in I \subset \r_*$, where $\{\t (s), \n (s), \b (s) \}$ is the multiplicative Frenet frame. We call $\x(s)$ a {\it multiplicative rectifying curve} if the multiplicative principal normal component is $0_*$. In other words, $\langle \x (s), \n(s) \rangle _* =0_*,$ for every $s \in I$ and so Eq. \eqref{decomp-1} is now
\begin{equation}
\x(s) = \lambda(s) \cdot_* \t (s) +_* \mu (s) \cdot_* \b(s), \label{rect-def}
\end{equation}
where $\lambda(s) =  \langle \x (s), \t(s) \rangle _*$ and $\mu(s) =  \langle \x (s), \b(s) \rangle _*$. In terms of the usual operations, Eq. \eqref{rect-def} writes as
$$
x_i(s)=e^{\log \lambda(s) \log t_i(s) + \log \mu(s) \log b_i(s)}, \quad i =1,2,3 .
$$

\quad The first result of this section characterizes the multiplicative rectifying curves in terms of their multiplicative tangent and binormal components.

\begin{proposition} \label{chen.pr.1}
If $\x(s) \subset \E_*^3$ with $\kappa \neq 0_*$ is a multiplicative rectifying curve, then nowhere the multiplicative torsion is $0_*$ and 
\begin{equation*}
 \langle \x (s), \t(s) \rangle _* =s+_*a =as, \quad  \langle \x (s), \b(s) \rangle _*  =b, \quad a,b \in \r_*, b \neq 0_*. 
\end{equation*}
The converse statement is true as well.
\end{proposition}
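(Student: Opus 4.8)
The plan is to transplant B.-Y. Chen's classical argument into the multiplicative category: differentiate the defining decomposition \eqref{rect-def} once and read off the scalar conditions that the multiplicatively orthonormal Frenet frame imposes. So first I would write $\x(s)=\lambda(s)\cdot_*\t(s)+_*\mu(s)\cdot_*\b(s)$ with $\lambda=\langle\x,\t\rangle_*$, $\mu=\langle\x,\b\rangle_*$, and apply the multiplicative derivative to both sides. Since $\x$ is parametrized by multiplicative arc length, the left-hand side is $\x^*=\t$; on the right-hand side I would use the componentwise multiplicative Leibniz rule together with the multiplicative Frenet formulas $\t^*=\kappa\cdot_*\n$ and $\b^*=-_*\tau\cdot_*\n$, obtaining
\begin{equation*}
\t=\lambda^*\cdot_*\t+_*(\lambda\cdot_*\kappa-_*\mu\cdot_*\tau)\cdot_*\n+_*\mu^*\cdot_*\b.
\end{equation*}

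Comparing $\t$-, $\n$- and $\b$-components (legitimate since $\{\t,\n,\b\}$ is mutually multiplicative orthogonal and multiplicative unitary) yields the three scalar equations $\lambda^*=1_*$, $\lambda\cdot_*\kappa=\mu\cdot_*\tau$ and $\mu^*=0_*$. Next I would ``integrate'' these multiplicative ODEs via $f^*(s)=e^{s(\log f(s))'}$: from $\mu^*=0_*$ we get $(\log\mu)'=0$, so $\mu\equiv b\in\r_*$; from $\lambda^*=1_*$ we get $(\log\lambda)'=1/s$, so $\log\lambda=\log s+\log a$, i.e. $\lambda(s)=s+_*a$ (and indeed $s+_*a=as$, as claimed). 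To see $b\neq0_*$, suppose $b=\mu=0_*$; then $\lambda\cdot_*\kappa=\mu\cdot_*\tau=0_*$, and since $\kappa\neq0_*$ this forces $\log\lambda\equiv0$, i.e. $\lambda\equiv0_*$, contradicting $\lambda(s)=s+_*a$. With $b\neq0_*$ the middle equation then gives $\tau=(\lambda\cdot_*\kappa)/_*b$.

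For the converse I would substitute the hypotheses $\langle\x,\t\rangle_*=s+_*a$ and $\langle\x,\b\rangle_*=b\neq0_*$ into the general decomposition \eqref{decomp-1}, keeping the principal normal component $\nu:=\langle\x,\n\rangle_*$ as an unknown, and differentiate once more using the same Frenet formulas. Matching the $\t$-component of $\x^*=\t$ produces $1_*-_*\nu\cdot_*\kappa=1_*$, hence $\nu\cdot_*\kappa=0_*$, and since $\kappa\neq0_*$ we conclude $\nu=0_*$; thus $\x$ is a multiplicative rectifying curve. The $\n$-component then collapses (using $\nu=0_*$ and $\nu^*=0_*$) to $\lambda\cdot_*\kappa=b\cdot_*\tau$, so once again $\tau=(\lambda\cdot_*\kappa)/_*b$.

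I expect two delicate points. The first is purely computational bookkeeping: handling the multiplicative Leibniz rule for a scalar-times-vector product, keeping the associativity $\lambda\cdot_*(\kappa\cdot_*\n)=(\lambda\cdot_*\kappa)\cdot_*\n$ straight, and correctly anti-differentiating $\lambda^*=1_*$ and $\mu^*=0_*$ in the multiplicative sense. The second, which I would phrase most carefully, is the assertion that $\tau$ is \emph{nowhere} $0_*$: from $\lambda\cdot_*\kappa=\mu\cdot_*\tau$ with $\kappa\neq0_*$ and $\mu$ constant, $\tau$ can vanish only where $\lambda=s+_*a$ does; it cannot vanish on a subinterval, since there $\tau\equiv0_*$ would make the curve planar as well as rectifying (hence $\x$ multiplicative collinear with $\t$), contradicting $\kappa\neq0_*$ together with $\mu=b\neq0_*$ — so only the single candidate point with $s+_*a=0_*$ has to be kept out of $I$.
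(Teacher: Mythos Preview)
Your forward direction is exactly the paper's argument: differentiate \eqref{rect-def}, apply the multiplicative Frenet formulas, read off $\lambda^*=1_*$, $\lambda\cdot_*\kappa=\mu\cdot_*\tau$, $\mu^*=0_*$, and integrate multiplicatively. Your treatment of the ``$\tau$ nowhere $0_*$'' clause is in fact more careful than the paper's, which simply asserts it ``analogously''; you correctly isolate the single candidate point $s=-_*a$.

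For the converse you take a slightly different route. The paper differentiates only the relation $\langle\x,\b\rangle_*=b$ and obtains $-_*\tau\cdot_*\langle\x,\n\rangle_*=0_*$, then invokes $\tau\neq0_*$ to conclude. You instead keep $\nu=\langle\x,\n\rangle_*$ in the full decomposition, differentiate, and match the $\t$-component to get $\nu\cdot_*\kappa=0_*$, concluding from $\kappa\neq0_*$ alone. Your version is marginally cleaner: it uses only the hypothesis $\kappa\neq0_*$ that is explicitly stated, whereas the paper's argument tacitly treats $\tau\neq0_*$ as part of the converse hypothesis. Both are short and correct; yours just sidesteps an ambiguity in what ``the converse statement'' is meant to include.
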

\begin{proof}
Assume that $\x(s)$ is a multiplicative rectifying curve. Taking a multiplicative differentiation in Eq. \eqref{rect-def},
$$
\x^*(s)=\lambda^*(s) \cdot_* \t (s)  +_* \lambda(s)  \cdot_* \t^* (s) +_* \mu^*(s)\cdot_* \b(s) +_* \mu(s)\cdot_* \b^*(s). 
$$
The multiplicative Frenet formulas follow
$$
(\lambda^*(s)-_*1_*) \cdot_* \t (s) +_* (\lambda^*(s)\cdot_* \kappa(s) -_*\mu(s) \cdot_* \tau(s)) \cdot_* \n(s) +\mu^*(s)\cdot_* \b(s) =\mathbf{0}_*.
$$
Due to the multiplicative linearly independence,
\begin{equation}
\lambda^*(s) =1_*, \quad  \lambda(s) \cdot_* \kappa (s) = \mu (s)\cdot_* \tau(s), \quad \mu^*(s) = 0_*.  \label{chen3}
\end{equation}
We here conclude that 
$$ 
\lambda (s) =\int_{*}^{s}1_*\cdot_* {d_*u}=e^{\int^{s} \frac{1}{u}\log edu }=e^{\log s + \log a } =s+_*a , \quad a>0,
$$ 
and 
$$ 
\mu (s) =\int_{*}^{s}0_*\cdot_* {d_*u}=e^{\int^{s} \frac{1}{u}\log 1du }=e^{\log b  } =b , \quad b>0.
$$ 
Notice here that $b \neq 0_*$ because otherwise one derives from the middle equation in Eq. \eqref{chen3} that $\kappa(s)$ is $0_*$. This contradicts with biregularity of $\x(s)$. Analogously, $\tau(s)$ is nowhere $0_*$. 

\quad Conversely, suppose that the multiplicative tangential and binormal components are 
$$
\langle \x (s), \t(s) \rangle _* =s+_*a, \quad  \langle \x (s), \b(s) \rangle _* = b,
$$
where $a,b \in \r_*, b \neq 0_*$. Using Eq. \eqref{metric}, 
\begin{eqnarray*}
 0_*&=&(\langle \x (s), \b(s) \rangle _*)^*\\
&=& \langle  \x^* (s), \b(s) \rangle _* +_* \langle \x (s),\b^*(s) \rangle _* \\
&=&\langle  \t (s), \b(s) \rangle _* -_*\tau(s) \langle \x (s),\n(s) \rangle _* \\
&=&-_*\tau(s) \langle \x (s),\n(s) \rangle _*.
\end{eqnarray*}
Since the multiplicative curvatures are nowhere $0_*$, we may easily obtain $\langle \x (s), \n(s) \rangle _* =0_*$. This completes the proof.
\end{proof}

\quad Besides Proposition \ref{chen.pr.1}, several characterizations of multiplicative rectifying curves may be presented as follows.

\begin{proposition} \label{chen.pr.2}
If $\x(s) \subset \E_*^3$ is a multiplicative rectifying curve, then the multiplicative ratio of the multiplicative curvatures is 
$$
\tau(s) /_* \kappa (s) = c\cdot_*s+_*d=e^{c \log s +\log d}, \quad c,d \in \r_*, c \neq 0_*. 
$$
The converse statement is true as well.
\end{proposition}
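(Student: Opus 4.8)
The plan is to read off both implications from Proposition~\ref{chen.pr.1}, whose proof in particular establishes the relation $\lambda(s)\cdot_*\kappa(s)=\mu(s)\cdot_*\tau(s)$ (the middle equation in Eq.~\eqref{chen3}) linking the multiplicative curvatures to the tangential and binormal components $\lambda=\langle\x,\t\rangle_*$ and $\mu=\langle\x,\b\rangle_*$.

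For the direct implication, suppose $\x(s)$ is a multiplicative rectifying curve. By Proposition~\ref{chen.pr.1} we have $\lambda(s)=s+_*a$ and $\mu(s)=b$ with $a,b\in\r_*$, $b\neq0_*$, and $\tau$ nowhere $0_*$. Since $\r_*$ is a field and $\mu=b\neq0_*$, $\kappa\neq0_*$, I may divide $\lambda\cdot_*\kappa=\mu\cdot_*\tau$ multiplicatively by $\mu\cdot_*\kappa$ to get $\tau/_*\kappa=\lambda/_*\mu=(s+_*a)/_*b$. Passing to the ordinary operations, $(s+_*a)/_*b=e^{(\log s+\log a)/\log b}$, so putting $c:=e^{1/\log b}\in\r_*$ and $d:=a/_*b\in\r_*$ gives $\tau/_*\kappa=c\cdot_*s+_*d$, with $c\neq0_*$ because $1/\log b\neq0$. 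This part is a short computation with no real difficulty.

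For the converse, assume $\tau/_*\kappa=c\cdot_*s+_*d$ with $c\neq0_*$. I would first choose constants $b:=e^{1/\log c}\in\r_*$ (legitimate since $c\neq0_*$, hence $\log c\neq0$) and $a:=d/_*c\in\r_*$; a direct check shows $(s+_*a)/_*b=c\cdot_*s+_*d=\tau/_*\kappa$, equivalently
$$
(s+_*a)\cdot_*\kappa(s)=b\cdot_*\tau(s).
$$
Then, mimicking Chen's classical argument, I would introduce the multiplicative vector field $\z(s)=\x(s)-_*(s+_*a)\cdot_*\t(s)-_*b\cdot_*\b(s)$ along $\x$ and compute its multiplicative derivative. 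Using $\x^*=\t$, the multiplicative Leibniz rule for scalar-times-vector products together with $(s+_*a)^*=1_*$ and $b^*=0_*$ (both immediate from $f^*(x)=e^{x(\log f(x))'}$), and the multiplicative Frenet formulas $\t^*=\kappa\cdot_*\n$, $\b^*=-_*\tau\cdot_*\n$, the tangential parts cancel and one is left with $\z^*(s)=\bigl(b\cdot_*\tau(s)-_*(s+_*a)\cdot_*\kappa(s)\bigr)\cdot_*\n(s)$, which is $\mathbf{0}_*$ by the displayed identity. Since $\z^*=\mathbf{0}_*$ forces each component of $\z$ to be constant, $\z$ is a constant vector $P$, whence $\x(s)-_*P=(s+_*a)\cdot_*\t(s)+_*b\cdot_*\b(s)$; as a multiplicative translation is a multiplicative rigid motion with the same Frenet apparatus, this exhibits $\x(s)$ as congruent to a curve with vanishing multiplicative principal normal component, i.e. a multiplicative rectifying curve.

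I expect no genuine obstacle: the only work is the cancellation in the computation of $\z^*$, which is forced by the choice $(s+_*a)\cdot_*\kappa=b\cdot_*\tau$. The points needing a little care are keeping the passage between the $+_*,\cdot_*,/_*$ notation and the exponential form consistent when defining $a,b,c,d$, and noting that---because ``rectifying'' is an affine, origin-dependent property---the honest conclusion of the converse is congruence to a multiplicative rectifying curve (equivalently, placing the origin at the point $P$ produced above makes $\x$ itself rectifying), exactly as in Chen's original formulation.
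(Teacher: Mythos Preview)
Your proposal is correct and follows essentially the same route as the paper: both directions are read off from Proposition~\ref{chen.pr.1} via the relation $\lambda\cdot_*\kappa=\mu\cdot_*\tau$, and for the converse both you and the paper introduce the auxiliary vector field $\x(s)-_*(s+_*a)\cdot_*\t(s)-_*b\cdot_*\b(s)$, differentiate using the multiplicative Frenet formulas, and conclude congruence to a multiplicative rectifying curve. Your added remark that the honest conclusion of the converse is congruence (rather than literal rectifying status) matches the paper's own wording.
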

\begin{proof}
By the middle equation in Eq. \eqref{chen3}, one writes
$$
\tau(s) /_* \kappa (s) = \lambda(s)/_* \mu (s) = (s+_*a)/_*b =e^{\frac {\log s + \log a}{\log b} }, \quad a,b \in \r_*, b \neq 0_*.
$$ 
Setting $\log c =1 / \log b$ and $ \log d = \log a / \log b$ gives the first part of the proof.

\quad Conversely, suppose that $\x (s)$ with $\kappa \neq 0_*$ holds $\tau(s)/_* \kappa (s) = c \cdot_* s+_*d$, for $ c,d \in \r_*, c \neq 0_*. $ Also, we may write
$$
\tau(s) /_* \kappa (s) = (s+_*a)/_*b , \quad a,b \in \r_* , b \neq 0_*
$$

or equivalently,
$$b \cdot_*\tau(s) -_* (s+_*a) \cdot_* \kappa (s) =0_* .$$
We now set
$$ f(s)= \x (s) -_*(s+_*a) \cdot_* \t (s) -_* b \cdot_* \b (s),$$
where $f(s)$ is multiplicative differentiable on its domain. If we take multiplicative differentiation of the last equation and if we use the multiplicative Frenet formulas, then we obtain
$$ f^*(s)=[-_*(s+_*a) \cdot_* \kappa (s) +_* b \cdot_* \tau (s)] \cdot_* \n (s) .$$
Here concludes that $f^*(s)=0_*$ or equivalently $f(s)$ is constant. Then, $\x (s)$ is multiplicative congruent to a multiplicative rectifying curve. 
\end{proof}

\quad We define the {\it multiplicative distance function} of $\x(s)$ as $ \rho (s) = \| \x(s) \|_* $. Hence, we have the following.

\begin{proposition} \label{chen.pr.3}
If $\x(s) \subset \E_*^3$ is a multiplicative rectifying curve with $\kappa >0_*$, then
\begin{equation}
 \rho (s) ^{2_*}=s^{2_*}+_*e^c \cdot_* s+_*e^d, \quad c,d \in \r, d> 0, \label{chen5}
\end{equation}
or equivalently,
$$
e^{(\log \rho (s) )^2} = e^{(\log s )^2+c \log s +d}.
$$
The converse statement is true as well.
\end{proposition}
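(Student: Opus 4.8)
The plan is to prove the two implications separately; in both directions everything reduces to the orthonormality of the multiplicative Frenet frame together with the formula $f^*(s)=e^{s(\log f(s))'}$.

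For the forward implication, I would invoke the rectifying hypothesis $\langle \x(s),\n(s)\rangle_*=0_*$ and Proposition~\ref{chen.pr.1}, which supplies $\langle\x(s),\t(s)\rangle_*=s+_*a$ and $\langle\x(s),\b(s)\rangle_*=b$ for constants $a,b\in\r_*$ with $b\neq 0_*$. Since $\rho(s)^{2_*}=\|\x(s)\|_*^{2_*}=\langle\x(s),\x(s)\rangle_*$ and $\{\t,\n,\b\}$ is multiplicative orthonormal, the decomposition~\eqref{decomp-1} yields $\rho(s)^{2_*}=(s+_*a)^{2_*}+_*b^{2_*}$. Passing to the usual operations this reads $(\log\rho(s))^2=(\log s+\log a)^2+(\log b)^2$; expanding the square and setting $c=2\log a$ and $d=(\log a)^2+(\log b)^2$ gives exactly \eqref{chen5}, with $d>0$ because $b\neq 0_*$ forces $\log b\neq 0$.

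For the converse, I would differentiate the hypothesis $\langle\x(s),\x(s)\rangle_*=e^{(\log s)^2+c\log s+d}$ twice. First, \eqref{metric} together with $\t=\x^*$ gives $(\langle\x,\x\rangle_*)^*=e^2\cdot_*\langle\x(s),\t(s)\rangle_*$, while applying $f^*(s)=e^{s(\log f(s))'}$ to the right-hand side gives $e^{2\log s+c}$; hence $\langle\x(s),\t(s)\rangle_*=s+_*e^{c/2}$. Differentiating this identity once more, the left-hand side equals, by \eqref{metric} and the multiplicative Frenet formulas, $\langle\t,\t\rangle_*+_*\kappa(s)\cdot_*\langle\x(s),\n(s)\rangle_*=1_*+_*\kappa(s)\cdot_*\langle\x(s),\n(s)\rangle_*$, whereas the right-hand side is $1_*$. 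Therefore $\kappa(s)\cdot_*\langle\x(s),\n(s)\rangle_*=0_*$, and since $\kappa>0_*$ we conclude $\langle\x(s),\n(s)\rangle_*=0_*$, that is, $\x(s)$ is a multiplicative rectifying curve.

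The argument is essentially bookkeeping and no step is a genuine obstacle; the one computation to carry out with care is $(e^{(\log s)^2+c\log s+d})^*$, where $\log f(s)=(\log s)^2+c\log s+d$ so that $s(\log f(s))'=2\log s+c$ — it is precisely this that makes the tangential component affine in $s$ (in the multiplicative sense) and produces the cancellation of the $\n$-component at the second differentiation. One should also keep in mind that the multiplicative orthonormality and the Frenet formulas require $\x$ to be biregular with $\kappa>0_*$, which is assumed throughout.
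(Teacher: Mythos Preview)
Your proposal is correct and follows essentially the same route as the paper: the forward direction invokes Proposition~\ref{chen.pr.1} and the orthonormality of the Frenet frame to expand $\rho(s)^{2_*}$, then relabels the constants; the converse differentiates $\langle\x,\x\rangle_*$ twice and uses the Frenet formula $\t^*=\kappa\cdot_*\n$ together with $\kappa>0_*$ to force $\langle\x,\n\rangle_*=0_*$. Your explicit computation of $(e^{(\log s)^2+c\log s+d})^*$ and the identification $a=e^{c/2}$ match the paper's step ``because $e^c/_*e^2=a$'' exactly.
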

\begin{proof}
Assume that $\x(s)$ is a multiplicative rectifying curve. It then follows from Eq. \eqref{rect-def} that 
$$
\rho (s)^{2_*}= \langle \x (s), \x (s) \rangle _* = \lambda (s)^{2_*} +_* \mu(s)^{2_*}.
$$ 
Here, by Proposition \ref{chen.pr.1} we know that $\lambda (s)=s+_*a$ and $\mu(s)=b$, for $a,b \in \r_*$, $b \neq 0_*$. Hence
$$\rho (s)^{2_*}=s^{2_*}+_*e^2 \cdot_* a \cdot _* s +_*a^{2_*}+_*b^{2_*}.$$ 
Setting 
$$
e^c=e^2 \cdot_* a=e^{2 \log a}, \quad e^d=a^{2_*}+_*b^{2_*}=e^{(\log a)^2 + (\log b)^2},
$$
we arrive to Eq. \eqref{chen5}. Conversely, let Eq. \eqref{chen5} hold. We will show that $\x(s)$ is a multiplicative rectifying curve. Then,
\begin{equation*}
\langle \x (s), \x (s) \rangle_* = s^{2_*}+_*e^c \cdot_* s+_*e^d.
\end{equation*}
Taking multiplicative differentiation, we have 
$$
e^2 \cdot_*\langle \t (s), \x (s) \rangle _*=e^2 \cdot_*s+_*c,
$$
or, because $e^c/_*e^2=a$,
$$
\langle \t (s), \x (s) \rangle _*=s+_*a.
$$
We again take multiplicative differentiation, obtaining 
$$1_*+_* \kappa (s) \cdot_* \langle \n (s), \x (s) \rangle_*= 1_*,$$ 
where we used the multiplicative Frenet formulas. Hence, $\langle \n (s), \x (s) \rangle_* = 0_*$, completing the proof.
\end{proof}

\begin{proposition} \label{chen.pr.4}
If $\x(s) \subset \E_*^3$ with $\kappa \neq 0_*$ is a multiplicative rectifying curve, then $ \rho(s) $ is nonconstant and  $\| \x^{\perp_*}(s) \|_*$  is constant. The converse statement is true as well.
\end{proposition}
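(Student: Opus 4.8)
The plan is to prove the two implications separately. The forward implication is a quick corollary of the characterizations already established (Propositions \ref{chen.pr.1} and \ref{chen.pr.3}); the converse is the substantive part, and I would obtain it by differentiating $\langle\x,\x\rangle_* = \rho^{2_*}$ twice and using that on such a curve the multiplicative tangential component satisfies a simple first-order equation.

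For the forward direction, assume $\x(s)$ is a multiplicative rectifying curve, so $\langle\x(s),\n(s)\rangle_* = 0_*$. Then in the displayed formula for $\|\x^{\perp_*}(s)\|_*$ the principal-normal term drops out and $\|\x^{\perp_*}(s)\|_* = |\langle\x(s),\b(s)\rangle_*|_* = |b|_*$, a constant, by Proposition \ref{chen.pr.1}. For the non-constancy of $\rho$, I would quote Proposition \ref{chen.pr.3}: it gives $(\log\rho(s))^2 = (\log s)^2 + c\log s + d$, which is visibly a non-constant function of $s$, so $\rho$ cannot be constant.

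For the converse, assume $\rho$ is non-constant and $\|\x^{\perp_*}\|_*$ is constant, and write $\lambda(s) = \langle\x(s),\t(s)\rangle_*$. From the Frenet decomposition \eqref{decomp-1} one has $\rho^{2_*} = \lambda^{2_*} +_* \|\x^{\perp_*}\|_*^{2_*}$, i.e. $(\log\rho)^2 = (\log\lambda)^2 + K$ for a constant $K\ge 0$. Differentiating $\langle\x,\x\rangle_* = \rho^{2_*}$ and using $\x^* = \t$ together with \eqref{metric} gives $(\rho^{2_*})^* = e^2\cdot_*\lambda$, i.e. $s\,((\log\rho)^2)' = 2\log\lambda$. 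Combining these and differentiating once more yields $\log\lambda\,\big((\log\lambda)' - 1/s\big) = 0$, that is, at each point $\lambda = 0_*$ or $\lambda^* = 1_*$. Since $\lambda\equiv 0_*$ would force $\rho^{2_*}\equiv e^K$ and contradict the non-constancy of $\rho$, one concludes (see the next paragraph) that $\lambda^* \equiv 1_*$ on $I$, i.e. $\lambda(s) = s +_* a$. Differentiating $\lambda = \langle\x,\t\rangle_*$ once more with the multiplicative Frenet formulas then gives $1_* = 1_* +_* \kappa\cdot_*\langle\x,\n\rangle_*$, hence $\langle\x,\n\rangle_* = 0_*$ because $\kappa\neq 0_*$; thus $\x$ is multiplicative rectifying. (Equivalently, once $\lambda(s) = s+_* a$ is known one may compute $(\log\rho)^2 = (\log s)^2 + c\log s + d$ and invoke the converse half of Proposition \ref{chen.pr.3}.)

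The differentiations above are routine and parallel the proof of Proposition \ref{prop1}. The one point requiring care — and the step I expect to be the main obstacle — is deducing $\lambda^*\equiv 1_*$ from the pointwise alternative ``$\lambda = 0_*$ or $\lambda^* = 1_*$''. If $\log\lambda$ vanished on a nonempty open interval, then $\lambda^*$ would equal $0_*$ there while equalling $1_*$ on the adjacent region where $\lambda\neq 0_*$, contradicting the continuity of $s\mapsto\lambda^*(s)$ built into multiplicative differentiability of $\x$; hence the zero set of $\log\lambda$ has empty interior, $\lambda^* = 1_*$ on a dense set, and continuity propagates it to all of $I$. I would state this continuity input explicitly, since, exactly as for the classical analogue in Chen's work, it is what makes the equivalence clean.
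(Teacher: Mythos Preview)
Your proof is correct and follows essentially the same line as the paper's. The forward direction is handled identically, by citing Propositions \ref{chen.pr.1} and \ref{chen.pr.3}. For the converse, the paper also writes $\langle\x,\x\rangle_* = \langle\x,\t\rangle_*^{2_*} +_* m^{2_*}$ and takes one multiplicative derivative; using $(\langle\x,\t\rangle_*)^* = 1_* +_* \kappa\cdot_*\langle\x,\n\rangle_*$ directly, it obtains $e^2\cdot_*\lambda = e^2\cdot_*\lambda\cdot_*(1_*+_*\kappa\cdot_*\langle\x,\n\rangle_*)$ and concludes $\langle\x,\n\rangle_* = 0_*$ from $\lambda\neq 0_*$. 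Your version factors the same computation into two steps (first $\lambda^* = 1_*$, then $\langle\x,\n\rangle_* = 0_*$), which is equivalent. The one genuine addition in your write-up is the continuity argument disposing of the pointwise alternative ``$\lambda = 0_*$ or $\lambda^* = 1_*$''; the paper simply asserts that $\langle\x,\t\rangle_*$ cannot be $0_*$ because $\rho$ is nonconstant, so your treatment of this point is actually more careful than the original.
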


\begin{proof}
Because Propositions \ref{chen.pr.1} and \ref{chen.pr.3}, the only converse statement will be proved. Assume that $ \rho (s) = \| \x (s) \|_* $ is nonconstant and $\| \x^{\perp_*}(s) \|_* = m > 0_*$, $m \in \r_*$. In terms of the multiplicative Frenet frame, the latter assumption yields
$$
m^{2_*}=\langle \x(s), \n(s) \rangle_*^{2_*} +_*\langle \x(s), \b(s) \rangle_*^{2_*},
$$
and so
$$  \langle \x(s), \x(s) \rangle_* = \langle \x(s), \t(s) \rangle_*^{2_*} +_*m^{2_*}.$$
Taking multiplicative derivative and then considering the multiplicative Frenet formulas,
$$  e^2\cdot_* \langle \x(s), \t(s) \rangle_* =e^2\cdot_* \langle \x(s), \t(s) \rangle_* \cdot_*  (1_* +_* \kappa(s) \cdot_* \langle \x(s), \n(s) \rangle_*). $$
Because the assumption, i.e. $\rho(s)$ is not constant, $\langle \x(s), \t(s) \rangle_*$ cannot be $0_*$, implying $\langle \x(s), \n(s) \rangle_* =0_*$. This completes the proof.
\end{proof}

\quad We now introduce 
$$\tan_*s= e^{\tan (\log s)}, \quad s \in (e^{-\pi/2},e^{\pi/2}),$$
and
$$\sec_*s= e^{\sec (\log s)}, \quad s \in [0_*,e^{\pi}], s \neq e^{\pi/2}.$$
It is direct to conclude that $(\sec_* s)^{2*}=1_*+_*(\tan_* s)^{2*}$. 

\quad In what follows, we determine all the multiplicative rectifying curves by means of the multiplicative spherical curves.
 
\begin{theorem} \label{chen.th.1}
Let $\x(s) \subset \E_*^3$ with $\kappa \neq 0_*$ be a multiplicative rectifying curve and $\mathbb{S}_{*}^{2}$ the multiplicative sphere of radius $1_*$. Then, there is a multiplicative reparametrization of $\x(s)$ such that
$$  
\x(\tilde{s}) = (a\cdot_* \sec_* \tilde{s})\cdot_* \y(\tilde{s}) , \quad a \in \r_*, a>0_*,
$$
where $ \y(\tilde{s})$ is a parameterized curve lying in $\mathbb{S}_{*}^{2}$ by multiplicative arc length. The converse statement is true as well.
\end{theorem}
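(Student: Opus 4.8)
The plan is to prove both implications by introducing the normalized spherical companion $\y(s):=\x(s)/_*\rho(s)$ of $\x$ and then reparametrizing by multiplicative arc length; the identity $(\sec_*s)^{2_*}=1_*+_*(\tan_*s)^{2_*}$ quoted just above the statement is exactly what makes the formulas close up. \emph{Necessity.} Suppose $\x(s)$ is a multiplicative rectifying curve with $\kappa\neq0_*$. By Proposition \ref{chen.pr.1}, $\langle\x,\t\rangle_*=s+_*a$ and $\langle\x,\b\rangle_*=b$ with $b\neq0_*$, so after a multiplicative translation of the arc-length parameter (which is still an arc-length parameter) we may assume $\langle\x,\t\rangle_*=s$. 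Since $\langle\x,\n\rangle_*=0_*$, the decomposition \eqref{decomp-1} gives $\rho(s)^{2_*}=\langle\x,\x\rangle_*=s^{2_*}+_*b^{2_*}$; in particular $\rho$ is nowhere $0_*$, so $\y$ is well defined, and $\|\y\|_*=1_*$, i.e. $\y(s)\subset\mathbb{S}_*^2$. Differentiating $\x=\rho\cdot_*\y$ yields $\t=\rho^*\cdot_*\y+_*\rho\cdot_*\y^*$; pairing this with itself and using $\|\y\|_*=1_*$, $\langle\y,\y^*\rangle_*=0_*$ (from \eqref{metric}) and $\rho^*=s/_*\rho$ (differentiate $\rho^{2_*}=s^{2_*}+_*b^{2_*}$) gives $1_*=(\rho^*)^{2_*}+_*\rho^{2_*}\cdot_*\|\y^*\|_*^{2_*}$, whence $\|\y^*\|_*=|b|_*/_*\rho^{2_*}$.

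Now let $\tilde s=\int_{*}^{s}\|\y^*(u)\|_*\cdot_*d_*u$ be the multiplicative arc length of $\y$. Evaluating this multiplicative integral (substitute $v=\log u$ in the defining ordinary integral) gives $\log\tilde s=\arctan\!\big(\log s/|\log b|\big)$, equivalently $s=|b|_*\cdot_*\tan_*\tilde s$ (up to a harmless multiplicative translation of $\tilde s$). Feeding this into $\rho^{2_*}=s^{2_*}+_*b^{2_*}$ and using $(\sec_*\tilde s)^{2_*}=1_*+_*(\tan_*\tilde s)^{2_*}$ produces $\rho=|b|_*\cdot_*\sec_*\tilde s$, so that $\x(\tilde s)=\rho(\tilde s)\cdot_*\y(\tilde s)=(|b|_*\cdot_*\sec_*\tilde s)\cdot_*\y(\tilde s)$. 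Setting $a:=|b|_*>0_*$ gives the stated form, with $\y(\tilde s)$ parametrized by its multiplicative arc length by construction.

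\emph{Sufficiency.} Conversely, let $\x(\tilde s)=(a\cdot_*\sec_*\tilde s)\cdot_*\y(\tilde s)$ with $\y$ parametrized by multiplicative arc length in $\mathbb{S}_*^2$ and $a>0_*$; write $\sigma:=a\cdot_*\sec_*\tilde s$. Using $(\sec_*\tilde s)^*=\sec_*\tilde s\cdot_*\tan_*\tilde s$ together with $\|\y\|_*=\|\y^*\|_*=1_*$ and $\langle\y,\y^*\rangle_*=0_*$, the multiplicative speed of $\x$ in the parameter $\tilde s$ comes out to be $\sigma\cdot_*\sec_*\tilde s=a\cdot_*(\sec_*\tilde s)^{2_*}$, so its own multiplicative arc length is $s=a\cdot_*\tan_*\tilde s$, its multiplicative tangent is $\t=\sin_*\tilde s\cdot_*\y+_*\cos_*\tilde s\cdot_*\y^*$ with $\sin_*u:=e^{\sin\log u}$, and hence $\langle\x,\t\rangle_*=\sigma\cdot_*\sin_*\tilde s=a\cdot_*\tan_*\tilde s=s$. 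Differentiating $\langle\x,\t\rangle_*=s$ and using the multiplicative Frenet formulas gives $1_*=1_*+_*\kappa\cdot_*\langle\x,\n\rangle_*$, so $\kappa\cdot_*\langle\x,\n\rangle_*=0_*$; provided $\y$ is not a multiplicative great circle (in which degenerate case $\x$ merely traces a multiplicative line and $\kappa=0_*$) we have $\kappa\neq0_*$ and therefore $\langle\x,\n\rangle_*=0_*$, i.e. $\x$ is a multiplicative rectifying curve. Alternatively, since $\rho=a\cdot_*\sec_*\tilde s$ is nonconstant while $\|\x^{\perp_*}\|_*^{2_*}=\rho^{2_*}-_*\langle\x,\t\rangle_*^{2_*}=a^{2_*}$ is constant, one may simply invoke Proposition \ref{chen.pr.4}.

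\emph{Where the work is.} The only genuinely nonroutine step is the arc-length bookkeeping in the necessity part: evaluating the multiplicative integral of $|b|_*/_*\rho^{2_*}$, recognizing the primitive $\arctan$, and thereby turning $s$ into $|b|_*\cdot_*\tan_*\tilde s$ and $\rho$ into $|b|_*\cdot_*\sec_*\tilde s$. Everything else is crank-turning with the multiplicative Frenet formulas, the Leibniz and chain rules, and \eqref{metric}. One could in fact bypass the whole computation by observing that, under the componentwise logarithm together with the substitution $u=\log s$, a multiplicative arc-length curve in $\E_*^3$ corresponds to an ordinary unit-speed curve in $\r^3$, the multiplicative Frenet apparatus and the rectifying condition passing to their Newtonian counterparts, and then quoting B.-Y. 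Chen's classical theorem \cite{chen}; the argument sketched above, however, stays entirely within the multiplicative framework.
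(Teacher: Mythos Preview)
Your proof is correct and follows essentially the same route as the paper's: define $\y=\x/_*\rho$, compute $\|\y^*\|_*$, integrate to get $\log\tilde s=\arctan(\log s/\log a)$, and read off $\rho=a\cdot_*\sec_*\tilde s$; for the converse, show $\|\x^{\perp_*}\|_*=a$ and invoke Proposition~\ref{chen.pr.4}. The only cosmetic differences are that you start the forward direction from Proposition~\ref{chen.pr.1} (writing $\rho^{2_*}=s^{2_*}+_*b^{2_*}$) where the paper starts from Proposition~\ref{chen.pr.3}, and in the converse you additionally compute $\langle\x,\t\rangle_*=s$ directly before offering the Proposition~\ref{chen.pr.4} argument as an alternative --- the paper goes straight to the latter.
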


\begin{proof}
Suppose that $0_*$ is included in the domain of $\x$. By Proposition \ref{chen.pr.3}, we have 
$$
\rho (s)^{2_*}  = s^{2_*}+_*e^c \cdot_* s+_*e^d, \quad d > 0. 
$$
Up to a multiplicative translation in $s$, we may take $\rho (s)^{2_*}   = s^{2_*}+_*e^d$. Since $d>0$, there is a constant $a$ such that $e^d=a^{2_*}$. Here $a$ must be greater than $0_*$ because $0_* \in I$. Introduce a curve $\y (s)$ as follows
\begin{equation}
\x(s) =\rho(s) \cdot_* \y(s), \label{chen6}
\end{equation}
where $\rho(s) =(s^{2_*}+_*a^{2_*})^{\frac{1}{2}_*}$. Since $\langle \y(s), \y(s) \rangle_*=1_*$, the curve $\y (s)$ is a subset of $\mathbb{S}_{*}^{2}$. Notice here that 
$$
e^2\cdot_*\langle \y^*(s), \y(s) \rangle_*=0_*.
$$
Taking multiplicative differentiation in Eq. \eqref{chen6},
\begin{equation*}
\x^*(s)  =\rho^*(s)\cdot_* \y(s)+_*\rho(s)\cdot_* \y^*(s),
\end{equation*}
where, because $\rho^*(s)=s/_*\rho(s)$,
\begin{equation*}
\x^*(s)  = ( s/_*\rho(s))\cdot_* \y(s)+_*\rho(s)\cdot_* \y^*(s). 
\end{equation*}
Noting $\langle \x^*(s) , \x^*(s)  \rangle_*=1_*$, we conclude
$$ \langle \y^*(s), \y^*(s) \rangle_*=\left ( 1_* -_* ( s/_*\rho(s))^{2_*} \right) /_* \rho(s)^{2_*}. $$
In terms of the usual operations,
$$ \langle \y^*(s), \y^*(s) \rangle_*=e^{ \left ( \frac{(\log a)^2}{((\log s)^2 + (\log a)^2)^2} \right )}.$$
In order to parametrize $\y(s)$ by multiplicative arc length, we set
$$ \tilde{s} = \int_{*0_*}^{s} \langle \y^*(s), \y^*(s) \rangle_*^{\frac{1}{2}_*} \cdot_* d_*u 
,$$
or
$$ \tilde{s} = \int_{*0_*}^{s}  e^{ \left ( \frac{(\log a)^2}{((\log s)^2 + (\log a)^2)^2} \right )^{\frac{1}{2}}}  \cdot_* d_*u .
$$
By the definiton of the multiplicative integral,
$$ \log \tilde{s} = \left ( \int_{1}^{s} \frac{1}{u} \frac{\log a}{(\log u)^2 + (\log a)^2}  du  \right ). $$
Hence,
$$  \log  \tilde{s}=\arctan \left(\frac{\log s}{\log a} \right), $$
or $\log s = (\log a) \tan (\log  \tilde{s})$. This immediately yields
$$
s=a \cdot_* \tan_*\tilde{s}.
$$
Then, 
$$
\rho(\tilde{s})=\left ((a \cdot_* \tan_*\tilde{s})^{2_*}+_*a^{2_*} \right)^{\frac{1}{2}_*}=\left( (\log a)^2(1+\tan^2 (\log \tilde{s}))\right)^{\frac{1}{2}}=e^{(\log a)\sec(\log \tilde{s})}
$$
or
$$
\rho(\tilde{s})=a \cdot_* \sec_*\tilde{s}.
$$
Considering this into Eq. \eqref{chen6} completes the first part of the proof.

\quad Conversely, suppose that $\tilde{s} \mapsto  \x(\tilde{s})$ is defined by
$$  \x(\tilde{s}) = (a\cdot_* \sec_* \tilde{s})\cdot_* \y(\tilde{s}), $$
where $a >0_*$ and $\y(\tilde{s}) \subset \mathbb{S}_{*}^{2}$ with $ \| \y^*(\tilde{s}) \|_* =1_*$. We will show that $  \x(\tilde{s}) $ is a multiplicative rectifying curve. We first observe $\rho(\tilde{s})=(a\cdot_* \sec_* \tilde{s})$, which is nonconstant. Now we take multiplicative derivative of $\x(\tilde{s})$ with respect to $ \tilde{s}$,
\begin{equation*}
\x^*(\tilde{s}) =  (a\cdot_* \sec_* \tilde{s})^* \cdot_*  \y(\tilde{s}) +_*  (a\cdot_* \sec_* \tilde{s}) \cdot_* \y^*(\tilde{s}) . 
\end{equation*}
Because $(a\cdot_* \sec_* \tilde{s})^*=a\cdot_* \sec_* \tilde{s} \tan_*\tilde{s}$,
\begin{equation}
\x^*(\tilde{s}) =  (a\cdot_* \sec_* \tilde{s}) \cdot_*  ((\tan_* \tilde{s}) \cdot_* \y(\tilde{s}) +_* \y^*(\tilde{s}) ). \label{xy}
\end{equation}
Point out that $\langle \x(\tilde{s}), \y^*(\tilde{s}) \rangle_*=0_*$ because $\langle \y(\tilde{s}), \y^*(\tilde{s}) \rangle_*=0_*$. Multiplying Eq. \eqref{xy} by $\x(\tilde{s})$ in the multiplicative sense, then
$$
\langle \x(\tilde{s}), \x^*(\tilde{s}) \rangle_* =a\cdot_* \sec_* \tilde{s} \cdot_*  \tan_* \tilde{s} \cdot_*   \langle \x(\tilde{s}), \y(\tilde{s}) \rangle_*
$$
or
$$
\langle \x(\tilde{s}), \x^*(\tilde{s}) \rangle_* =(a\cdot_* \sec_* \tilde{s})^{2_*} \cdot_*  \tan_* \tilde{s} .
$$
On the other hand, we may write
$$ \x(\tilde{s}) =\x^{\intercal_*}+_* \x^{\perp *}(\tilde{s})=\langle \x(\tilde{s}), \x^*(\tilde{s})/_*\| \x^*(\tilde{s}) \|_* \rangle_* \cdot_* (\x^*(\tilde{s})/_*\| \x^*(\tilde{s}) \|_* )+_* \x^{\perp *}(\tilde{s}),$$
or
$$ \x(\tilde{s}) =(1/_*\rho(\tilde{s})^{2_*})\langle \x(\tilde{s}), \x^*(\tilde{s}) \rangle_* \cdot_* \x^*(\tilde{s}) +_* \x^{\perp *}(\tilde{s}).$$
Hence,
$$
\x^{\perp_*}(\tilde{s})= \x(\tilde{s}) -_*\tan_* \tilde{s} \cdot_* \x^*(\tilde{s}) 
$$
and
$$
 \langle \x^{\perp *}(\tilde{s}) ,\x^{\perp *}(\tilde{s}) \rangle_* = \langle  \x(\tilde{s}) , \x(\tilde{s}) \rangle_*-_*e^2\cdot_* \langle  \x(\tilde{s}) , \x^*(\tilde{s}) \rangle_*+(\tan_*\tilde{s})^{2_*}\cdot_*\langle  \x^*(\tilde{s}) , \x^*(\tilde{s}) \rangle_*.
$$
By a simple calculation, we find $\langle \x^{\perp *}(\tilde{s}) ,\x^{\perp *}(\tilde{s}) \rangle_* =a^{2_*}$. The remaining part of the proof is by Proposition \ref{chen.pr.4}.

\end{proof}

\quad As an example, consider the following multiplicative spherical curve
$$
\y(s)=\left (e^{\frac{1}{\sqrt {2}}},e^{\frac{1}{\sqrt {2}}\cos (\log s^{\sqrt {2}})},e^{\frac{1}{\sqrt {2}}\sin (\log s^{\sqrt {2}})} \right )
$$
parameterized by multiplicative arc length. Now, by Theorem \ref{chen.th.1} if we set $a=1_*$ and $\x(s)= \sec_* s\cdot_* \y(s)$, then we obtain the rectifying curve given by Eq. \eqref{intro3}.

\begin{figure}[hbtp]
\begin{center}
\includegraphics[width=0.35\textwidth]{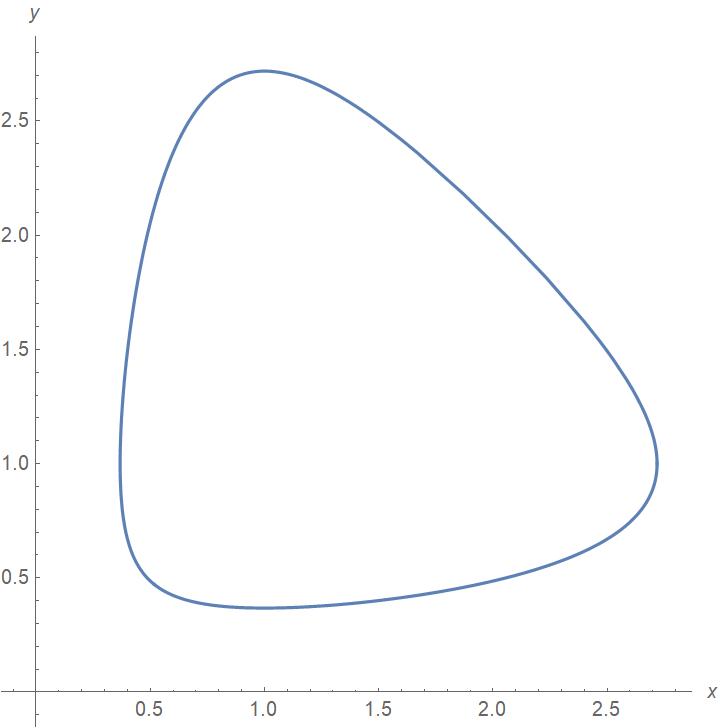} \qquad \includegraphics[width=0.2\textwidth]{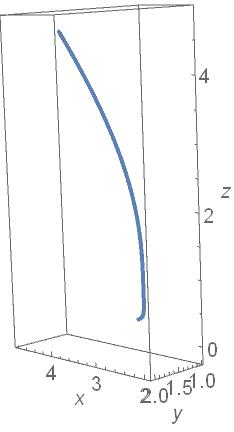}
\end{center}
\caption{Left: a multiplicative circle at centered $(1,1)$ and radius $e$. Right: a multiplicative rectifying curve parametrized by Eq. \eqref{intro3}, $0.5 \leq t \leq 3$.}\label{fig1}
\end{figure}

%\begin{figure}[hbtp]
%\begin{center}
%\includegraphics[width=.4\textwidth]{fig5.jpg}
%\end{center}
%\caption{Multiplicative Chen curves.\label{fig3}}
%\end{figure}

\section*{Conclusions}
\quad Over the last decade, an ascent number of the differential-geometric studies (see \cite{aydin,baleanu,baleanu2,gozutok,yajima,yajima2}) have appeared in which a different calculus (e.g. fractional calculus) from Newtonian calculus is carried out. In the cited papers, the local and non-local fractional derivatives are used. In the case of non-local fractional derivatives, the usual Leibniz and chain rules are known not to be satisfied, which is a major obstacle to establish a differential-geometric theory. In addition, the local fractional derivatives have no remarkable effect on the differential-geometric objects, see \cite{aydin1}. There is a gap in the literature here for a non-Newtonian calculus to be applied these objects.

\quad We highlight two important aspects of our study when a non-Newtonian calculus is performed on a differential-geometric theory: The first is to fill the gap mentioned above. The second is to allow the use of an alternative calculus to the usual Newtonian calculus in differential geometry, the advantages of which have already been addressed in Section \ref{intro}.
 
\section*{Acknowledgments}
This work is supported by The Scientific and Technological Council of Turkey (TUBITAK) with number (123F055).

\section*{Conflict of interest}
 The authors declare that there is no conflict of interest.

%%%%%%%%%%%%%%%%%%%%%%%%%%%%%%%

 \end{document}